\numberwithin{equation}{section}
\newtheorem{lemma}{Lemma}[section]
\newtheorem{example}{Example}[section]
\newtheorem{corollary}{Corollary}[section]
\newtheorem{definition}{Definition}[section]
\newcommand{\be}{\begin{equation}}
\newcommand{\ee}{\end{equation}}
\newcommand{\bea}{\begin{eqnarray}}
\newcommand{\eea}{\end{eqnarray}}
\newcommand{\beas}{\begin{eqnarray*}}
\newcommand{\eeas}{\end{eqnarray*}}
\newcommand{\ba}{\begin{array}}
\newcommand{\ea}{\end{array}}
\def\bge{\begin{eqnarray}}
\def\bgee{\begin{eqnarray*}}
\def\ege{\end{eqnarray}}
\def\egee{\end{eqnarray*}}
\begin{document}

\title{ Fractional Differential Equations Involving Caputo Fractional Derivative with Mittag-Leffler Non-Singular Kernel: Comparison Principles and Applications}

\author{Mohammed Al-Refai\\
{\small Department of Mathematical Sciences, UAE University,}\\ {\small P.O.Box 15551, Al Ain, UAE.}\\
\small{ m${}_{-}$alrefai@uaeu.ac.ae }
 }
 \maketitle

\begin{abstract}

In this paper we study linear and nonlinear  fractional differential equations involving the   Caputo  fractional derivative  with Mittag-Leffler non-singular kernel of order
 $0<\alpha<1.$  We first obtain a new estimate of  the fractional derivative of a function at its extreme points and derive a necessary condition for the existence of a solution to the linear fractional equation. The obtained sufficient condition determine the  initial condition of the associated fractional initial value problem.  We then derive comparison principles to the linear fractional equations. We apply these principles to obtain a norm estimate of  solutions to the linear equation and to obtain a uniqueness result to the nonlinear equation. We also  derive a lower and upper bound of solutions to the nonlinear equation. The applicability of the new results is illustrated through several examples.
\newline
Key words and phrases: Fractional differential  equations,  Maximum principle, Fractional derivatives.
\noindent

\end{abstract}

\section{Introduction}

Fractional models have been implemented to model various problems in several fields, \cite{FDL,Hil00,Kla08,Mai_book}.  The non-locality of the fractional derivative makes fractional models  more practical than the usual ones, especially for  systems which involve memory. In recent years there are great interests to develop new types of non-local fractional derivative with non-singular kernel, see \cite{abdon,caputo}.  The idea is to have more types of non-local fractional derivative, and it is the role of  application that will determine which fractional model is more proper. The theory of fractional models is effected by  the type of the fractional derivative. Therefore, several papers have been devoted recently to study the  new types of fractional derivatives and their applications, see \cite{TD ROMP, TD ADE  Mont} for the Caputo-Fabrizio fractional derivative and \cite{abdon2,TD ADE 2016,badr,badr2,dj,gau} for the Abdon-Baleanu fractional derivative.

In this paper, we analyze the solutions of a class of fractional differential equations involving the Caputo fractional derivative with Mittag-Leffler non-singular kernel of order
 $0<\alpha<1.$ To the best of our knowledge this is the first theoretical study of fractional differential equations with fractional derivative of non-singular kernel.  We start with the definition and main properties of the nonlocal fractional derivative with Mittag-leffler non-singular kernel. For more details the reader is referred to \cite{abdon,abdon2,thabet1}.

 \begin{definition} \label{DEF1}
Let $f \in H^1(a,b),~~a<b,~~\alpha \in (0,1)$, the left Caputo fractional derivative with Mittag-Leffler non-singular kernel  is defined by
\begin{equation}\label{d1}
 ({}^{ABC}  {}_{a}D^\alpha f)(t)=\frac{B(\alpha)}{1-\alpha} \int_a^t E_\alpha{\left[-\frac{\alpha}{1-\alpha}(t-s)^\alpha\right]} f^\prime(s) ds.
\end{equation}
where $B(\alpha)>0$ is a normalization function  satisfying $B(0)=B(1)=1,$ and $E_\alpha(s)$ is the well known Mittag-Leffler function..
\begin{definition}
Let $f \in H^1(a,b),~~a<b,~~\alpha \in (0,1)$, the left Riemann-Liouville fractional derivative with Mittag-Leffler non-singular kernel  is defined by
\begin{equation}\label{d1}
 ({}^{ABR} {}_{a}D^\alpha f)(t)=\frac{B(\alpha)}{1-\alpha} \frac{d}{dt} \int_a^t E_\alpha{\left[-\frac{\alpha}{1-\alpha} (t-s)^\alpha \right]} f(s) ds.
\end{equation}
\end{definition}
The associated fractional integral is defined by
\begin{equation}\label{d3}
  ({}^{AB}{}_{a}I^\alpha f)(t)=\frac{1-\alpha}{B(\alpha)}f(t)+\frac{\alpha}{B(\alpha)}(_{a}I^\alpha f)(t),
\end{equation}
\end{definition}
where $(_{a}I^\alpha f)(t)$ is the left Riemann-Liouville fractional integral of order $\alpha> 0$  defined by
$$(_{a}I^\alpha f)(t)=\frac{1}{\Gamma(\alpha)}\int_a^t (t-s)^{\alpha-1} f(s) ds.$$
The following holds true,
\begin{eqnarray}
({}^{ABC} {}_{0}D^\alpha f)(t)&=&({}^{ABR} {}_{0}D^\alpha f)(t)-\frac{B(\alpha)}{1-\alpha} f(0) E_\alpha(-\frac{\alpha}{1-\alpha} t^\alpha),\label{prop1}\\
({}^{ABR} {}_{a}D^\alpha  \ {}^{AB} {}_{a}I^\alpha f)(t)&=&f(t),\label{prop2}\\
({}^{AB} {}_{a}I^\alpha \ {}^{ABR} {}_{a}D^\alpha    f)(t)&=&f(t).\label{prop3}
\end{eqnarray}

The rest of the  paper is organized as follows. In Section 2, we present a  new estimate of the fractional derivative of a function at its extreme points. In Section 3, we develop new comparison principles for linear fractional equations and obtain a norm bound to their solutions. We also, obtain the solution solution for a class of linear equations in a closed form,  and present a    necessary condition for the existence of their solutions. In Section 4, we consider nonlinear fractional equations. We  obtain a uniqueness result and derive upper and lower bounds to the solution of the problem. Finally we present some examples to illustrate the applicability of the obtained results.

\section{Estimates of fractional derivatives at extreme points}

We start with estimating the fractional derivative of a function at its extreme points, this result is analogous  to the ones obtained in \cite{ref10} for the Caputo and Riemann-Liouville fractional derivatives. The applicability of these results were indicated in (\cite{refai2}-\cite{refai-distributed}) by establishing new comparison principles and studying various fractional diffusion models. Therefore, the current result can be used to study fractional diffusion models involving the Caputo and Riemann-Liouville fractional derivatives with Mittag-Leffler non-singular kernel, and we leave this for a future work.

\begin{lemma}
\label{th1}
Let a function $f\in H^1(a,b)$ attain its maximum at a point $t_0\in [a,b]$ and $0<\alpha <1$. Then the inequality
\begin{equation}
\label{ineq}
({}^{ABC} {}_{a}D^\alpha f)(t_0)\ge \frac{B(\alpha)}{1-\alpha}\ E_\alpha[{-\frac{\alpha}{1-\alpha}(t_0-a)^\alpha}] (f(t_0)-f(a))\ge 0.
\end{equation}
holds true.
\end{lemma}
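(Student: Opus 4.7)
If $t_0=a$ both sides vanish trivially (the integral in \eqref{d1} is over an interval of length zero and $f(t_0)-f(a)=0$), so I would assume $t_0>a$. The strategy is to shift the derivative off $f$ and onto the Mittag-Leffler kernel via integration by parts. This is legitimate because $f\in H^1(a,b)\subset AC[a,b]$ and, as one checks, the kernel
$$g(s):=E_\alpha\!\left[-\tfrac{\alpha}{1-\alpha}(t_0-s)^\alpha\right]$$
is absolutely continuous on $[a,t_0]$ (its derivative contains a factor $(t_0-s)^{\alpha-1}$, which is integrable since $\alpha>0$).

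Next I would exploit the sign information coming from the maximum. Setting $h(s):=f(t_0)-f(s)$, the hypothesis gives $h\ge 0$ on $[a,b]$ and $h(t_0)=0$, with $h'=-f'$ a.e. Substituting $f'=-h'$ in \eqref{d1} and integrating by parts yields
$$\int_a^{t_0} g(s)\,f'(s)\,ds \;=\; g(a)\,\bigl(f(t_0)-f(a)\bigr)\;+\;\int_a^{t_0} g'(s)\,h(s)\,ds,$$
after using $h(t_0)=0$ to kill the upper boundary term. Since $g(a)=E_\alpha[-\tfrac{\alpha}{1-\alpha}(t_0-a)^\alpha]$, the first term on the right is exactly the bound that needs to appear in \eqref{ineq}; the remaining integral should be non-negative.

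The crux of the argument is therefore establishing two facts about $g$: that $g(s)>0$ and that $g'(s)\ge 0$ on $(a,t_0)$. Both follow from the classical observation that, for $0<\alpha<1$, the function $x\mapsto E_\alpha(-x)$ is completely monotonic on $[0,\infty)$, hence in particular positive and non-increasing. Composing with $s\mapsto \frac{\alpha}{1-\alpha}(t_0-s)^\alpha$, which is itself non-increasing in $s$, delivers positivity and monotonicity of $g$. I expect this monotonicity step to be the main technical obstacle, in the sense that it is the one non-trivial analytic ingredient; everything else is algebraic bookkeeping.

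Once these properties are in hand, the integral $\int_a^{t_0} g'(s)h(s)\,ds$ is non-negative (product of two non-negative quantities), so multiplying through by $B(\alpha)/(1-\alpha)>0$ gives the first inequality in \eqref{ineq}. For the second inequality ($\ge 0$), I would note that $B(\alpha)>0$, $1-\alpha>0$, $E_\alpha[-\tfrac{\alpha}{1-\alpha}(t_0-a)^\alpha]>0$ by the same complete-monotonicity fact, and $f(t_0)-f(a)\ge 0$ because $t_0$ is a global maximum on $[a,b]\ni a$. This completes the plan.
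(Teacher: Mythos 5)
Your proposal is correct and follows essentially the same route as the paper: introduce the auxiliary function $f(t_0)-f(\cdot)$, integrate by parts to move the derivative onto the kernel $E_\alpha[-\tfrac{\alpha}{1-\alpha}(t_0-s)^\alpha]$, and conclude from the positivity and monotonicity of that kernel. The only cosmetic difference is that you invoke complete monotonicity of $E_\alpha(-x)$ directly, whereas the paper derives the sign of the kernel's derivative from the explicit representation $E_\alpha(-t^\alpha)=\int_0^\infty e^{-rt}K_\alpha(r)\,dr$ with $K_\alpha(r)>0$, which is the same fact in concrete form.
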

\begin{proof}
We define the auxiliary function \ $g(t)=f(t_0)-f(t), \ t\in [a,b].$ Then it follows that $g(t)\ge 0,$ on $[a,b],$  $g(t_0)=g'(t_0)=0$ and  $({}^{ABC} {}_{a}D^\alpha g)(t)=-({}^{ABC}{}_{a}D^\alpha f)(t).$ Since $g\in H^1(a,b)$, then $g'$ is integrable and
integrating by parts with \ $$u=E_\alpha[{-\frac{\alpha}{1-\alpha}(t_0-s)^\alpha}], \
\mbox{and} \ dv=g'(s) ds,$$ yields

\begin{eqnarray}
({}^{ABC} {}_{a}D^\alpha g)(t_0)&=&\frac{B(\alpha)}{1-\alpha}\int_a^{t_0}  E_\alpha[{-\frac{\alpha}{1-\alpha}(t_0-s)^\alpha}]g'(s) \ ds\nonumber\\
&=&\frac{B(\alpha)}{1-\alpha}\bigg(E_\alpha[{-\frac{\alpha}{1-\alpha}(t_0-s)^\alpha}]  g(s)|_a^{t_0}- \int_a^{t_0} \frac{d}{ds} E_\alpha[{-\frac{\alpha}{1-\alpha}(t_0-s)^\alpha}]g(s) ds\bigg) \nonumber\\
&=&\frac{B(\alpha)}{1-\alpha}\bigg(E_\alpha[0] g(t_0)-E_\alpha[{-\frac{\alpha}{1-\alpha} (t_0-a)^\alpha}] g(a)- \int_a^{t_0} \frac{d}{ds} E_\alpha[{-\frac{\alpha}{1-\alpha}(t_0-s)^\alpha}] g(s) ds\bigg)\nonumber\\
&=&\frac{B(\alpha)}{1-\alpha}\bigg(-E_\alpha[{-\frac{\alpha}{1-\alpha} (t_0-a)^\alpha}] g(a)- \int_a^{t_0} \frac{d}{ds} E_\alpha[{-\frac{\alpha}{1-\alpha}(t_0-s)^\alpha}] g(s) ds\bigg).\label{equ1}
\end{eqnarray}
We recall that for $0<\alpha<1$, see \cite{gorenflo}, we have
$$E_\alpha(-t^\alpha)=\int_0^\infty e^{-rt} K_\alpha(r) dr,$$
where $$K_\alpha(r)=\frac{1}{\pi} \frac{r^{\alpha-1} \sin(\alpha \pi)}{r^{2\alpha}+2r^{\alpha}\cos(\alpha \pi)+1}>0.$$ Thus,
\begin{eqnarray}
\frac{d}{ds} E_\alpha[-\frac{\alpha}{1-\alpha}(t_0-s)^\alpha]&=&\frac{d}{ds} E_\alpha\bigg[-\bigg((\frac{\alpha}{1-\alpha})^{1/\alpha}(t_0-s)\bigg)^\alpha\bigg]\nonumber\\
&=&\frac{d}{ds}\int_0^\infty e^{-r(\frac{\alpha}{1-\alpha})^{1/\alpha}(t_0-s)}K_\alpha(r)dr=\int_0^\infty \frac{d}{ds} e^{-r(\frac{\alpha}{1-\alpha})^{1/\alpha}(t_0-s)}K_\alpha(r)dr\nonumber\\
&=& (\frac{\alpha}{1-\alpha})^{1/\alpha}\int_0^\infty r e^{-r(\frac{\alpha}{1-\alpha})^{1/\alpha}(t_0-s)}K_\alpha(r)dr>0,
\end{eqnarray}
  which together with
 $g(t)\ge 0$ on $[a,b]$, will lead to  the integral in Eq. (\ref{equ1}) is nonnegative. We recall here that $E_\alpha[t]>0, \  0<\alpha<1,$ see \cite{hannaken}, and thus
\begin{eqnarray}
({}^{ABC} {}_{a}D^\alpha g)(t_0)&\le&\frac{B(\alpha)}{1-\alpha}\bigg(-E_\alpha[{-\frac{\alpha}{1-\alpha} (t_0-a)^\alpha}] g(a)\bigg)\nonumber\\
&&= -\frac{B(\alpha)}{1-\alpha}\ E_\alpha[{-\frac{\alpha}{1-\alpha} (t_0-a)^\alpha}] ( f(t_0)-f(a))\le 0 .\label{g11}
\end{eqnarray}
The last inequality yields
$$-({}^{ABC} {}_{a}D^\alpha f)(t_0) \le -\frac{B(\alpha)}{1-\alpha}\ E_\alpha[{-\frac{\alpha}{1-\alpha} (t_0-a)^\alpha}] ( f(t_0)-f(a))\le 0,$$ which proves the result.
\end{proof}

By applying analogous steps to $-f$ we have

\begin{lemma}
\label{th12}
Let a function $f\in H^1(a,b)$ attain its minimum at a point $t_0\in [a,b]$ and $0<\alpha <1$. Then the inequality
\begin{equation}
\label{ineq}
({}^{ABC} {}_{a}D^\alpha f)(t_0)\le \frac{B(\alpha)}{1-\alpha}\ E_\alpha[{-\frac{\alpha}{1-\alpha}t_0}] (f(t_0)-f(a))\le 0.
\end{equation}
holds true.
\end{lemma}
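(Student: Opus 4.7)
The plan is to reduce the statement directly to Lemma \ref{th1} by the change of sign $h = -f$. Since $f$ attains its minimum at $t_0 \in [a,b]$, the auxiliary function $h(t) = -f(t)$ lies in $H^1(a,b)$ and attains its maximum at $t_0$. So the hypotheses of Lemma \ref{th1} are satisfied for $h$.

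Next, I would invoke Lemma \ref{th1} on $h$ to obtain
\begin{equation*}
({}^{ABC} {}_{a}D^\alpha h)(t_0)\ge \frac{B(\alpha)}{1-\alpha}\ E_\alpha\bigl[-\tfrac{\alpha}{1-\alpha}(t_0-a)^\alpha\bigr] \bigl(h(t_0)-h(a)\bigr)\ge 0.
\end{equation*}
Then I would use the linearity of the operator ${}^{ABC}{}_{a}D^\alpha$, which is immediate from its integral representation \eqref{d1}, to rewrite $({}^{ABC}{}_{a}D^\alpha h)(t_0) = -({}^{ABC}{}_{a}D^\alpha f)(t_0)$ and $h(t_0)-h(a) = -(f(t_0)-f(a))$. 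Multiplying the resulting chain of inequalities by $-1$ reverses both inequalities and yields exactly \eqref{ineq}.

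There is really no obstacle here, since linearity of the Caputo derivative with Mittag-Leffler kernel is built into its definition and Lemma \ref{th1} has already done all the analytic work (the integration by parts and the positivity of $\tfrac{d}{ds}E_\alpha[-\tfrac{\alpha}{1-\alpha}(t_0-s)^\alpha]$ via the subordination formula). The only small point to flag in the write-up is that the exponent in the Mittag-Leffler factor should be $(t_0-a)^\alpha$ rather than $t_0$ as displayed in the statement; this is a typo in the statement and the proof will produce the correct expression automatically.
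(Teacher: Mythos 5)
Your proposal is correct and matches the paper exactly: the author states the result ``by applying analogous steps to $-f$,'' which is precisely your reduction to Lemma \ref{th1} via $h=-f$ and linearity of the operator. Your observation that the Mittag-Leffler argument in the statement should read $(t_0-a)^\alpha$ rather than $t_0$ is also correct; it is a typo carried over from Lemma \ref{th1}.
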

\begin{lemma}
\label{zero}
Let a  function $f\in H^1(a,b)$ then it holds that
\begin{equation}
\label{ineq1}
({}^{ABC} {}_{a}D^\alpha f)(a)=0, \ \ 0<\alpha<1.
\end{equation}

\end{lemma}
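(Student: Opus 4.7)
The proof will be essentially immediate from Definition~\ref{DEF1}. The plan is simply to substitute $t=a$ directly into the defining formula
$$({}^{ABC}{}_{a}D^\alpha f)(t)=\frac{B(\alpha)}{1-\alpha}\int_{a}^{t} E_\alpha\!\left[-\frac{\alpha}{1-\alpha}(t-s)^\alpha\right] f'(s)\, ds,$$
and observe that when $t=a$ the interval of integration collapses and the integral vanishes. The entire content of the lemma is this trivial fact, so the only thing that really needs verification is that the right-hand side is meaningful at $t=a$, i.e.\ that the integrand is $L^1$-integrable.

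To justify that, I would note that $f\in H^1(a,b)$ gives $f'\in L^2(a,b)\subset L^1(a,b)$, while the kernel satisfies $0< E_\alpha[-\frac{\alpha}{1-\alpha}(t-s)^\alpha]\le E_\alpha[0]=1$ for $0<\alpha<1$ (using the positivity and monotonicity of $E_\alpha$ on the negative axis, which was already invoked in the proof of Lemma~\ref{th1}). Thus the integrand is bounded in absolute value by $|f'(s)|$, giving an integrable majorant on $[a,b]$, and evaluation at $t=a$ produces the zero integral.

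Should the notation $({}^{ABC}{}_{a}D^\alpha f)(a)$ instead be read as the one-sided limit $\lim_{t\to a^+}({}^{ABC}{}_{a}D^\alpha f)(t)$, the same bound combined with Cauchy--Schwarz gives
$$\left|({}^{ABC}{}_{a}D^\alpha f)(t)\right|\le \frac{B(\alpha)}{1-\alpha}\,\sqrt{t-a}\,\|f'\|_{L^2(a,t)},$$
which tends to $0$ as $t\to a^+$ since $\|f'\|_{L^2(a,t)}$ is itself bounded (by $\|f'\|_{L^2(a,b)}$) and the prefactor $\sqrt{t-a}$ vanishes. Either reading yields the conclusion. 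There is no genuine obstacle in this proof; the only minor point to handle carefully is stating the integrability of the kernel--derivative product so that the ``integral from $a$ to $a$ equals zero'' step is rigorously justified under the $H^1$ hypothesis.
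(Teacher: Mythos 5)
Your proposal is correct and, in its second half, is essentially the paper's own argument: the paper proves the lemma precisely by the Cauchy--Schwarz bound $|({}^{ABC}{}_{a}D^\alpha f)(t)|^2\le \frac{B^2(\alpha)}{(1-\alpha)^2}\int_a^t E_\alpha[\cdot]^2\,ds\int_a^t (f')^2\,ds$ and the observation that $\int_a^a(f')^2\,ds=0$. Your additional direct-substitution reading and the integrability justification are fine and add nothing that conflicts with the paper.
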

\begin{proof}

Because $E_\alpha[-\frac{\alpha}{1-\alpha} (t-s)]$ is continuous on $[a,b]$, then it is in $L^2[a,b].$ Applying the Cauchy-Schwartz inequality we have
\begin{eqnarray}
|({}^{ABC} {}_{a}D^\alpha f)(t)|^2&\le &\frac{B^2(\alpha)}{(1-\alpha)^2}\int_a^{t}  \bigg(E_\alpha[{-\frac{\alpha}{1-\alpha}(t-s)^\alpha}]\bigg)^2 \ ds \  \int_a^t \bigg(f'(s)\bigg)^2 \ ds.\label{zzz}
\end{eqnarray}
Since $f \in H^1(a,b)$ then $f'$ is square integrable and it holds that $\int_a^a \big(f'(s)\big)^2 \ ds=0.$ The result is obtained as the first integral in Eq. (\ref{zzz}) is bounded.
\end{proof}


\section{The linear  equation}
We implement the results in Section 1 to obtain new comparison principles of the linear fractional differential equations of order $0<\alpha<1,$ and to derive a necessary condition for the existence of their solutions.  We then use these principles to obtain a norm
 bound of the solution. We also present the solution of certain linear equation by the Laplace transform.

\begin{lemma}(Comparison Principle-1)\label{comp1}
Let a function $u\in H^1(a,b)\cap C[a,b]$ satisfies the fractional inequality
\begin{equation}\label{ineq1}
P_\alpha(u)=({}^{ABC} {}_{a}D^\alpha u)(t)+p(t) u(t) \le 0, \ t>a, \ 0<\alpha<1,
\end{equation}
where $p(t)\ge  0$  is continuous on $[a,b]$ and $p(a)\neq 0$. Then $u(t)\le 0, \ t\ge a.$
\end{lemma}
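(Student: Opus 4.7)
The plan is to argue by contradiction: assume $u$ attains a positive maximum on $[a,b]$, use continuity to extract sign information about $u(a)$, then invoke Lemma \ref{th1} at the maximizing point to conflict with the hypothesis.

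\textbf{Step 1 (boundary value).} First I would establish $u(a)\le 0$. Lemma \ref{zero} gives $({}^{ABC}{}_aD^\alpha u)(a)=0$, and the Cauchy--Schwarz bound \eqref{zzz} used in its proof shows moreover that $({}^{ABC}{}_aD^\alpha u)(t)\to 0$ as $t\to a^+$. Combining this with the continuity of $p$ and $u$ and taking $t\to a^+$ in the inequality $P_\alpha(u)(t)\le 0$ yields $p(a)u(a)\le 0$. Since $p\ge 0$ and $p(a)\ne 0$ force $p(a)>0$, we conclude $u(a)\le 0$.

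\textbf{Step 2 (maximum-principle contradiction).} Suppose toward contradiction that $M:=\max_{[a,b]}u>0$, attained at some $t_0\in[a,b]$. Step 1 gives $u(a)\le 0<M$, so necessarily $t_0>a$. Applying Lemma \ref{th1} at $t_0$,
\[
({}^{ABC}{}_aD^\alpha u)(t_0)\ \ge\ \frac{B(\alpha)}{1-\alpha}\,E_\alpha\!\left[-\frac{\alpha}{1-\alpha}(t_0-a)^\alpha\right]\bigl(u(t_0)-u(a)\bigr).
\]
Because $u(t_0)-u(a)\ge M>0$ and the Mittag--Leffler factor is strictly positive, the right-hand side is strictly positive; hence $({}^{ABC}{}_aD^\alpha u)(t_0)>0$. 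Together with $p(t_0)u(t_0)\ge 0$, this yields $P_\alpha(u)(t_0)>0$, contradicting the assumed inequality. Therefore no positive maximum can exist, which gives $u(t)\le 0$ on $[a,b]$.

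\textbf{Main obstacle.} The only delicate issue is transferring the differential inequality, stated only for $t>a$, down to the endpoint $t=a$ so as to pin down the sign of $u(a)$; without this, Lemma \ref{th1} cannot rule out a maximum at an interior or right-endpoint point (since its lower bound could be made non-positive if $u(a)>0$). This is exactly the role of the assumption $p(a)\ne 0$ and of the continuity-at-$a$ estimate embedded in the proof of Lemma \ref{zero}. Everything else is a routine extreme-point argument.
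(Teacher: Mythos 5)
Your proof is correct and follows essentially the same route as the paper's: establish $u(a)\le 0$ by letting $t\to a^+$ in the inequality using Lemma \ref{zero}, then rule out a positive maximum via Lemma \ref{th1}. Your Step 1 is in fact slightly more careful than the paper's (which simply appeals to "continuity of the solution"), since you justify the passage to the endpoint through the Cauchy--Schwarz bound showing $({}^{ABC}{}_aD^\alpha u)(t)\to 0$ as $t\to a^+$.
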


\begin{proof}
Since $u\in H^1(a,b)$ then by Lemma \ref{zero} we have $({}^{ABC} {}_{a}D^\alpha u)(a)=0.$ By the continuity of the solution, the fractional inequality (\ref{ineq1}) yields
$p(a) u(a)\le 0$, and hence $u(a)\le 0.$ Assume by contradiction that the result is not true, because $u$ is continuous on $[a,b]$ then $u$ attains absolute maximum at $t_0\ge a$ with $u(t_0)>0.$
Since $u(a)\le 0,$ then $t_0>a.$ Applying the result of Lemma \ref{th1} we have
$$({}^{ABC} {}_{a}D^\alpha u)(t_0)\ge \frac{B(\alpha)}{1-\alpha}\ E_\alpha[{-\frac{\alpha}{1-\alpha}(t_0-a)^\alpha}] (u(t_0)-u(a))> 0.$$ We have
$$({}^{ABC} {}_{a}D^\alpha u)(t_0)+p(t_0) u(t_0)\ge ({}^{ABC} {}_{a}D^\alpha u)(t_0)>0,$$
which contradicts the fractional inequality (\ref{ineq1}), and completes the proof.

\end{proof}

\begin{corollary}(Comparison Principle-2)
Let  $u_1,u_2\in H^1(a,b)\cap C[a,b]$ be the solutions of
\begin{eqnarray}
({}^{ABC} {}_{a}D^\alpha u_1)(t)+p(t) u_1(t)&=&g_1(t), \ t>a, \ 0<\alpha<1,\nonumber\\
({}^{ABC} {}_{a}D^\alpha u_2)(t)+p(t) u_2(t)&=&g_2(t), \ t>a, \ 0<\alpha<1,\nonumber
\end{eqnarray}
where $p(t)\ge  0, g_1(t), g_2(t)$  are continuous on $[a,b]$ and $p(a)\neq 0.$ If $g_1(t)\le g_2(t),$ then it holds that $$u_1(t)\le u_2(t), \ \ \  t\ge a.$$
\end{corollary}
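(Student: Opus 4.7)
The plan is to reduce the comparison of two solutions to a single inequality of the form handled by Lemma \ref{comp1}. Specifically, I would set $w(t) = u_1(t) - u_2(t)$ and show that $w$ satisfies the hypotheses of Comparison Principle-1, from which the conclusion $w(t) \le 0$, equivalently $u_1(t) \le u_2(t)$, follows immediately.

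First I would verify the regularity: since $u_1, u_2 \in H^1(a,b) \cap C[a,b]$, the difference $w = u_1 - u_2$ lies in the same space because $H^1(a,b)$ and $C[a,b]$ are vector spaces. Next, I would invoke the linearity of the Caputo-type fractional derivative with Mittag-Leffler kernel, which is immediate from the integral definition (\ref{d1}), to write
\begin{equation*}
({}^{ABC} {}_{a}D^\alpha w)(t) = ({}^{ABC} {}_{a}D^\alpha u_1)(t) - ({}^{ABC} {}_{a}D^\alpha u_2)(t).
\end{equation*}
Subtracting the two given equations then yields
\begin{equation*}
P_\alpha(w) = ({}^{ABC} {}_{a}D^\alpha w)(t) + p(t) w(t) = g_1(t) - g_2(t) \le 0,
\end{equation*}
by the hypothesis $g_1 \le g_2$.

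Finally, since $p(t) \ge 0$ is continuous on $[a,b]$ with $p(a) \ne 0$, and $w \in H^1(a,b) \cap C[a,b]$ satisfies $P_\alpha(w) \le 0$, Lemma \ref{comp1} applies directly to $w$ and gives $w(t) \le 0$ for all $t \ge a$, which is the desired inequality $u_1(t) \le u_2(t)$.

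There is no real obstacle here: the entire argument rests on linearity of the operator $P_\alpha$ and a direct application of the previously established Comparison Principle-1. The only point worth double-checking is that linearity of the integral in (\ref{d1}) carries over cleanly, which it does because the kernel $E_\alpha\bigl[-\tfrac{\alpha}{1-\alpha}(t-s)^\alpha\bigr]$ is independent of the function being differentiated.
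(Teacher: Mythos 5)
Your proof is correct and follows essentially the same route as the paper: define the difference $z = u_1 - u_2$, use linearity of the operator to get $P_\alpha(z) = g_1 - g_2 \le 0$, and conclude via Lemma \ref{comp1}. Your additional remarks verifying that $z$ stays in $H^1(a,b)\cap C[a,b]$ and that the fractional derivative is linear are sound and merely make explicit what the paper leaves implicit.
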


\begin{proof}
Let $z=u_1-u_2,$ then it holds that
\begin{equation}
P_\alpha(z)=({}^{ABC} {}_{a}D^\alpha z)(t)+p(t) z(t)=g_1(t)-g_2(t) \le 0, \ t>a, \ 0<\alpha<1.
\end{equation}
Applying virtue of Lemma \ref{comp1} we have $z(t)\le 0$, and hence the result.
\end{proof}

\begin{lemma}\label{big}
Let $u \in H^1(a,b)$ be the solution of
\begin{eqnarray}
({}^{ABC} {}_{a}D^\alpha u)(t)+p(t) u(t)&=&g(t), \ t>a, \ 0<\alpha<1,\nonumber
\end{eqnarray}
where $p(t)> 0$  is continuous on $[a,b].$  Then it holds that $$||u||_{[a,b]}=\max_{t\in [a,b]}|u(t)| \le M=\max_{t\in [a,b]}\{|\frac{g(t)}{p(t)}|\}.$$
\end{lemma}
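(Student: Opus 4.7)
The plan is to reduce the norm estimate to two symmetric applications of the comparison principle (Lemma \ref{comp1}). The key preliminary observation is that the ABC derivative annihilates constants: since the integrand in definition (\ref{d1}) contains $f'(s)$, we have $({}^{ABC} {}_{a}D^\alpha M)(t)\equiv 0$ for any constant $M$. Note also that $u\in H^1(a,b)$ is automatically continuous on $[a,b]$ by the one-dimensional Sobolev embedding, so $u\in H^1(a,b)\cap C[a,b]$, and $p(t)>0$ on $[a,b]$ in particular gives $p(a)\neq 0$; both hypotheses of Lemma \ref{comp1} are therefore in force.

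First I would set $v(t)=u(t)-M$. By linearity of the ABC derivative and the observation above,
\begin{equation*}
P_\alpha(v)(t)=({}^{ABC} {}_{a}D^\alpha u)(t)+p(t)u(t)-p(t)M=g(t)-p(t)M.
\end{equation*}
Since $p(t)>0$, the definition of $M$ yields $g(t)/p(t)\le |g(t)/p(t)|\le M$, hence $g(t)-p(t)M\le 0$ and $P_\alpha(v)(t)\le 0$ for $t\in(a,b]$. Lemma \ref{comp1} then gives $v(t)\le 0$, i.e. $u(t)\le M$ on $[a,b]$.

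For the lower bound I would apply the same argument to $-u$, which satisfies the linear equation with right-hand side $-g(t)$. Setting $w(t)=-u(t)-M$, the identical computation produces
\begin{equation*}
P_\alpha(w)(t)=-g(t)-p(t)M\le 0,
\end{equation*}
since $-g(t)/p(t)\le |g(t)/p(t)|\le M$. Lemma \ref{comp1} again yields $w(t)\le 0$, so $u(t)\ge -M$. Combining the two bounds gives $|u(t)|\le M$ for every $t\in[a,b]$, which is the claim. I do not foresee a substantive obstacle: the only points requiring care are the vanishing of the ABC derivative on constants (immediate from the definition) and the verification that the hypotheses of the comparison principle hold for $v$ and $w$, both of which follow directly from $p>0$ on $[a,b]$ and the $H^1$-regularity of $u$.
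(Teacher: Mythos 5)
Your proposal is correct and follows essentially the same route as the paper: subtract the constant $M$ (and its negative) from $u$, verify that $P_\alpha(u-M)\le 0$ and $P_\alpha(-u-M)\le 0$ using $|g(t)|\le p(t)M$, and invoke Lemma \ref{comp1} twice. Your added remarks on the ABC derivative annihilating constants, the Sobolev embedding $H^1(a,b)\subset C[a,b]$, and $p>0$ implying $p(a)\neq 0$ are sound justifications of details the paper leaves implicit.
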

\begin{proof}
We have $M\ge |\frac{g(t)}{p(t)}|,$ or $M p(t) \ge |g(t)|, \ t\in [a,b].$
Let $v_1=u-M,$ then it holds that
\begin{eqnarray}
P_\alpha (v_1)&=& ({}^{ABC} {}_{a}D^\alpha v_1)(t)+p(t) v_1(t)=({}^{ABC} {}_{a}D^\alpha u)(t)+p(t) u(t)- p(t) M\nonumber\\
&=&g(t)-p(t) M \le |g(t)|-p(t) M\le 0.\nonumber
\end{eqnarray}
 Thus by virtue of Lemma \ref{comp1} we have
$
v_1=u-M\le 0$, or
\begin{equation}
u\le M. \label{dd1}
\end{equation}
Analogously, let  $v_2=-M-u$, then it holds that
\begin{eqnarray}
P_\alpha (v_2)&=& ({}^{ABC} {}_{a}D^\alpha v_2)(t)+p(t) v_2(t)=-({}^{ABC} {}_{a}D^\alpha u)(t)-p(t) u(t)- p(t) M\nonumber\\
&=&-g(t)-p(t) M \le -g(t)-|g(t)| \le  0. \nonumber
\end{eqnarray}
 Thus by the result of Lemma \ref{comp1} we have
$
v_2=-u-M\le 0$, or
\begin{equation}
u\ge -M. \label{dd2}
\end{equation}
By combining the results of Eq's (\ref{dd1}) and (\ref{dd2}) we have $|u(t)|\le M, \ t\in [a,b]$ and hence the result.
\end{proof}

\begin{lemma}\label{existence}

The fractional initial value problem
\begin{eqnarray}
({}^{ABC} {}_{a}D^\alpha u)(t)&=&\lambda u+f(t), \ \ t>0, \ 0<\alpha<1, \label{qw1}\\
u(0)&=&u_0. \label{qw2}
\end{eqnarray}
has the unique solution
\begin{equation}\label{sol}
u(t)=\frac{1}{B(\alpha)-\lambda(1-\alpha)}\bigg(B(\alpha) u_0 E_\alpha[\omega t^\alpha]+(1-\alpha)( g(t)*f'(t)+f(0) g(t))\bigg),
\end{equation}
in the functional space $H^1(0,b)\cap C[0,b]$, if and only if, $\lambda u_0+f(0)=0, $ where   $\omega=\frac{\lambda \alpha}{B(\alpha)-\lambda(1-\alpha)}, $ and $$
g(t)=E_\alpha[\omega t^\alpha]+\frac{\alpha}{1-\alpha}\frac{t^{\alpha-1}}{\Gamma(\alpha)}*E_\alpha[wt^\alpha].$$
\end{lemma}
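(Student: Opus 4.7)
The plan is to separate the ``if and only if'' into necessity, which uses Lemma \ref{zero} directly, and sufficiency, which is obtained by applying the Laplace transform and inverting to recover formula (\ref{sol}); uniqueness is then essentially automatic from the injectivity of the Laplace transform.

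\textbf{Necessity.} If $u\in H^1(0,b)\cap C[0,b]$ solves (\ref{qw1})--(\ref{qw2}), Lemma \ref{zero} yields $({}^{ABC}{}_{0}D^\alpha u)(0)=0$, and evaluating (\ref{qw1}) at $t=0$ using continuity of $u$ and $f$ gives $\lambda u_0+f(0)=0$.

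\textbf{Sufficiency via Laplace transform.} I would rewrite the ABC derivative as the convolution $\frac{B(\alpha)}{1-\alpha}\,E_\alpha[-c t^\alpha]*u'(t)$ with $c=\frac{\alpha}{1-\alpha}$, and use $\mathcal{L}\{E_\alpha[-c t^\alpha]\}(s)=s^{\alpha-1}/(s^\alpha+c)$ together with $\mathcal{L}\{u'\}(s)=sU(s)-u_0$ to transform (\ref{qw1}) into
\begin{equation*}
\frac{B(\alpha)\bigl(s^\alpha U(s)-s^{\alpha-1}u_0\bigr)}{(1-\alpha)s^\alpha+\alpha}=\lambda U(s)+F(s).
\end{equation*}
Solving algebraically, and noticing that the resulting denominator factors as $(B(\alpha)-\lambda(1-\alpha))(s^\alpha-\omega)$ by the very definition of $\omega$, I obtain
\begin{equation*}
U(s)=\frac{B(\alpha)u_0\,s^{\alpha-1}}{(B(\alpha)-\lambda(1-\alpha))(s^\alpha-\omega)}+\frac{(1-\alpha)(s^\alpha+c)F(s)}{(B(\alpha)-\lambda(1-\alpha))(s^\alpha-\omega)}.
\end{equation*}
The first summand inverts to $\frac{B(\alpha)u_0}{B(\alpha)-\lambda(1-\alpha)}E_\alpha[\omega t^\alpha]$ via $\mathcal{L}\{E_\alpha[\omega t^\alpha]\}(s)=s^{\alpha-1}/(s^\alpha-\omega)$. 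A direct computation using $\mathcal{L}\{t^{\alpha-1}/\Gamma(\alpha)\}(s)=1/s^\alpha$ gives $\mathcal{L}\{g\}(s)=(s^\alpha+c)/[s(s^\alpha-\omega)]$, so the coefficient of $F(s)$ in the second summand is exactly $(1-\alpha)s\,\mathcal{L}\{g\}(s)$. Writing $sF(s)=\mathcal{L}\{f'\}(s)+f(0)$ and applying the convolution theorem produces $(1-\alpha)(g*f'+f(0)g)$, which is formula (\ref{sol}). Uniqueness is automatic since the difference of two solutions has vanishing Laplace transform and hence vanishes.

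\textbf{Where the condition enters; main obstacle.} The derivation above never invoked $\lambda u_0+f(0)=0$, so the equivalence must be closed by checking the initial condition. At $t=0$, using $E_\alpha[0]=1$, the vanishing at $0$ of the Riemann--Liouville integral of a bounded function (so $g(0)=1$), and the vanishing of $g*f'$ at $t=0$, formula (\ref{sol}) gives $u(0)=[B(\alpha)u_0+(1-\alpha)f(0)]/[B(\alpha)-\lambda(1-\alpha)]$, which equals $u_0$ precisely when $\lambda u_0+f(0)=0$. The step requiring the most care is the Laplace-inversion bookkeeping---in particular, recognizing that $(1-\alpha)(s^\alpha+c)/(s^\alpha-\omega)$ packages neatly as $(1-\alpha)s\,\mathcal{L}\{g\}(s)$---and then verifying that the resulting $u$ really lies in $H^1(0,b)\cap C[0,b]$, since $g$ mixes the smooth Mittag--Leffler factor $E_\alpha[\omega t^\alpha]$ with the weakly singular kernel $t^{\alpha-1}/\Gamma(\alpha)$ through a convolution, and the compatibility condition is likely what forces the right cancellation near $t=0$.
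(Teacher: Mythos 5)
Your proof is correct and follows essentially the same route as the paper: necessity from Lemma \ref{zero}, then the Laplace transform with the same rearrangement of $G(s)=\frac{1}{s}\frac{s^\alpha+\frac{\alpha}{1-\alpha}}{s^\alpha-\omega}$ and the identity $s\mathcal{L}(f)=\mathcal{L}(f')+f(0)$. Your extra check that formula (\ref{sol}) returns $u(0)=u_0$ exactly when $\lambda u_0+f(0)=0$ is a welcome detail that the paper leaves implicit.
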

\begin{proof}
Since $u\in H^1(0,b)$ we have  $({}^{ABC} {}_{a}D^\alpha u)(0)=0.$ Thus, a necessary condition for the existence of a solution to Eq. (\ref{qw1}) is that
\begin{equation}\label{nec}
\lambda u_0+f(0)=0.
\end{equation}
Applying the Laplace transform to Eq. (\ref{qw1}) and using the fact that $$({}^{ABC} {}_{0}D^\alpha u)(t)=\frac{B(\alpha)}{1-\alpha} E_\alpha[-\frac{\alpha}{1-\alpha} t^\alpha]*u'(t),$$
we have

\begin{eqnarray}
\lambda L(u)+L(f(t))&=&\frac{B(\alpha)}{1-\alpha} L\bigg(E_\alpha[-\frac{\alpha}{1-\alpha} t^\alpha]*u'(t)\bigg).\nonumber
\end{eqnarray}

Applying the convolution result of the Laplace transform together with
$$ L(E_\alpha[-\frac{\alpha}{1-\alpha} t^\alpha])=\frac{s^{\alpha-1}}{s^\alpha+\frac{\alpha}{1-\alpha}}, \ \ \ \ \ \ |\frac{\alpha}{1-\alpha}\frac{1}{s^\alpha}|<1,  $$
will lead to
\begin{eqnarray}
\lambda L(u)+L(f(t))&=& \frac{B(\alpha)}{1-\alpha} \frac{s^{\alpha-1}}{s^\alpha+\frac{\alpha}{1-\alpha}}(s L(u)-u(0)).
\end{eqnarray}
Direction calculations will lead to
\begin{equation}\label{laplace}
L(u)=\frac{B(\alpha) u_0}{B(\alpha)-\lambda(1-\alpha)}\frac{s^{\alpha-1}}{s^\alpha-\omega}+\frac{1-\alpha}{B(\alpha)-\lambda (1-\alpha)} \frac{s^\alpha+\frac{\alpha}{1-\alpha}}{s^\alpha-\omega} L(f(t)),
\end{equation}
where $\omega=\frac{\lambda \alpha}{B(\alpha)-\lambda (1-\alpha)}.$ Thus,
\begin{eqnarray}
u(t)&=&\frac{B(\alpha) u_0}{B(\alpha)-\lambda(1-\alpha)}L^{-1}\bigg(\frac{s^{\alpha-1}}{s^\alpha-\omega}\bigg)+\frac{1-\alpha}{B(\alpha)-\lambda (1-\alpha)} L^{-1}\bigg( \frac{s^\alpha+\frac{\alpha}{1-\alpha}}{s^\alpha-\omega} L(f(t))\bigg),\nonumber\\
&=&\frac{B(\alpha) u_0}{B(\alpha)-\lambda(1-\alpha)} E_\alpha[\omega t^\alpha]+\frac{1-\alpha}{B(\alpha)-\lambda (1-\alpha)} L^{-1}\bigg( \frac{s^\alpha+\frac{\alpha}{1-\alpha}}{s^\alpha-\omega} L(f(t))\bigg).
\label{laplace2}
\end{eqnarray}
Let $$G(s)=\frac{1}{s}\frac{s^\alpha+\frac{\alpha}{1-\alpha}}{s^\alpha-\omega}=
\frac{s^{\alpha-1}}{s^\alpha-\omega}+\frac{\alpha}{1-\alpha}\frac{1}{s^\alpha} \frac{s^{\alpha-1}}{s^\alpha-\omega},$$
then $$g(t)=L^{-1}\big( G(s)\big)= E_\alpha(\omega t)+\frac{\alpha}{1-\alpha}\frac{t^{\alpha-1}}{\Gamma(\alpha)}*E_\alpha(\omega t^\alpha).$$

Applying the convolution result we have
\begin{eqnarray}
L^{-1}\bigg(\frac{s^\alpha+\frac{\alpha}{1-\alpha}}{s^\alpha-\omega} L(f(t))\bigg)&=&L^{-1}\bigg(G(s) s L(f(t))\bigg)\nonumber\\
&=&L^{-1}\bigg(G(s)[s L(f)-f(0)+f(0)]\bigg)=L^{-1}\bigg(G(s)[L(f')+f(0)]\bigg)\nonumber\\
&=&L^{-1}\bigg(G(s)L(f')+f(0) G(s)\bigg)=g(t)*f'(t)+f(0) g(t).\label{laplace3}
\end{eqnarray}
The result follows by substituting Eq. (\ref{laplace3}) in Eq. (\ref{laplace2}).
\end{proof}

\begin{corollary}\label{uniqueness}
The fractional differential equation
\begin{equation}\label{frac1}
({}^{ABC} {}_{0}D^\alpha u)(t)=\lambda u, \ \ t>0, \ 0<\alpha<1,
\end{equation}
has only the trivial solution $u=0$, in the functional space $H^1(0,b)\cap C[0,b]$.
\end{corollary}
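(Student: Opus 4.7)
The plan is to deduce this directly from Lemma \ref{existence}. Any $u \in H^1(0,b) \cap C[0,b]$ satisfying (\ref{frac1}) may be interpreted as the solution of the initial value problem (\ref{qw1})--(\ref{qw2}) with $f \equiv 0$ and $u_0 := u(0)$. Lemma \ref{existence} asserts that existence of such a solution \emph{requires} the compatibility condition $\lambda u_0 + f(0) = 0$; since $f \equiv 0$, this collapses to $\lambda u_0 = 0$, forcing $u_0 = 0$ whenever $\lambda \neq 0$.

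With $u_0 = 0$ and $f \equiv 0$ in hand, I would then invoke the explicit representation (\ref{sol}) furnished by the same lemma: the leading term carries the factor $u_0$, while each of the convolution terms carries a factor of either $f$ or $f(0)$, so every term on the right-hand side vanishes identically. Hence $u(t) \equiv 0$, which is the claim.

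I do not anticipate any real obstacle, as the result is essentially a bookkeeping corollary of the preceding lemma. The only point of care is the implicit hypothesis $\lambda \neq 0$: when $\lambda = 0$, equation (\ref{frac1}) reduces to $({}^{ABC} {}_{0}D^\alpha u)(t) = 0$, for which every constant function is a non-trivial $H^1$ solution, so the corollary as stated tacitly assumes $\lambda \neq 0$. An alternative route in the sub-case $\lambda < 0$ would instead apply the comparison principle Lemma \ref{comp1} with $p(t) \equiv -\lambda > 0$ to both $u$ and $-u$ to conclude $u \equiv 0$; however, that approach fails for $\lambda > 0$ (where $-\lambda < 0$), so the reduction to Lemma \ref{existence} is the appropriate general argument.
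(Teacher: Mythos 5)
Your proposal is correct and follows essentially the same route as the paper: apply Lemma \ref{existence} with $f\equiv 0$, use the necessary condition $\lambda u_0=0$ to force $u_0=0$, and then read off $u\equiv 0$ from the explicit formula. Your remark that the statement tacitly assumes $\lambda\neq 0$ (since constants solve the equation when $\lambda=0$) is a valid observation that the paper leaves implicit.
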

\begin{proof}
Applying the result of Lemma \ref{existence} with $f(t)=0,$ yields
$$u(t)=\frac{1}{B(\alpha)-\lambda(1-\alpha)} B(\alpha) u_0 E_\alpha[\omega t^\alpha].$$
The necessary condition for the existence of solution yields that $u_0=0,$ and hence the result.
\end{proof}

\section{The nonlinear equation}
In this section we apply the obtained comparison principles to establish a uniqueness result for a nonlinear   fractional differential equation and to estimate its solution. We have

\begin{lemma}\label{nonlinear}
Consider the nonlinear fractional differential equation
\begin{equation}\label{frac1}
({}^{ABC} {}_{a}D^\alpha u)(t)=f(t,u), \ \ t>a, \ 0<\alpha<1,
\end{equation}
 where $f(t,u)$ is a smooth function. If $f(t,u)$ is non-increasing with respect to $u$ then the above equation has at most one
  solution $u \in H^1(a,b)$.
\end{lemma}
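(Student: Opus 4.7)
The plan is to reduce the uniqueness question for the nonlinear equation to the homogeneous case of Comparison Principle-1. Suppose $u_1,u_2\in H^1(a,b)$ both satisfy the equation, and set $w:=u_1-u_2$, so that $w\in H^1(a,b)\cap C[a,b]$. Subtracting the two instances of the equation yields
$$({}^{ABC}{}_{a}D^\alpha w)(t)=f(t,u_1(t))-f(t,u_2(t)).$$
Since $f$ is smooth and $u_1,u_2$ are continuous on $[a,b]$, the integral form of the mean value theorem gives
$$f(t,u_1)-f(t,u_2)=-p(t)\,w(t),\qquad p(t):=-\int_0^1 f_u\bigl(t,u_2(t)+\tau w(t)\bigr)\,d\tau,$$
where $f_u$ denotes the partial derivative with respect to the second argument. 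The non-increasing hypothesis forces $f_u\le 0$, whence $p(t)\ge 0$; continuity of $p$ on $[a,b]$ follows from smoothness of $f$ together with continuity of $u_1,u_2$. Thus $w$ satisfies the linear homogeneous relation
$$P_\alpha(w)(t)=({}^{ABC}{}_{a}D^\alpha w)(t)+p(t)\,w(t)=0,\qquad t>a.$$

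From here I would invoke Lemma \ref{comp1} twice, applied to $w$ and to $-w$. Both functions satisfy $P_\alpha(\cdot)=0\le 0$ with the same nonnegative coefficient $p$, so Comparison Principle-1 yields $w(t)\le 0$ and $-w(t)\le 0$ throughout $[a,b]$, hence $w\equiv 0$ and uniqueness follows.

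The main obstacle is verifying the hypothesis $p(a)\ne 0$ required by Lemma \ref{comp1}; this translates to $f_u(a,u(a))<0$, i.e., strict monotonicity of $f$ in $u$ at the initial point. Under the bare "non-increasing" assumption this may fail, in which case the step in the proof of Lemma \ref{comp1} that pins down $w(a)\le 0$ from $p(a)w(a)\le 0$ collapses. A clean remedy is to strengthen the hypothesis to strict monotonicity (at least at $t=a$); alternatively, one can rerun the maximum-point argument directly, using that $({}^{ABC}{}_{a}D^\alpha w)(a)=0$ automatically for $w\in H^1$, so that if $w$ attains a positive maximum it must do so at some $t_0>a$ where Lemma \ref{th1} together with $p(t_0)\ge 0$ produces the needed contradiction, perhaps after a small perturbation argument to recover control of $w(a)$.
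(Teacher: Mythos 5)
Your argument is essentially the paper's: set $z=u_1-u_2$, linearize $f(t,u_1)-f(t,u_2)$ by the mean value theorem, and apply Lemma \ref{comp1} to both $z$ and $-z$. Your use of the integral form of the mean value theorem is a small technical improvement, since it yields a coefficient $p(t)=-\int_0^1 f_u(t,u_2+\tau z)\,d\tau$ that is visibly continuous in $t$, whereas the paper's pointwise intermediate value $u^*$ depends on $t$ in an uncontrolled way.

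The obstacle you flag at the end is not a defect of your write-up but a genuine gap, and it is present in the paper's own proof: there the non-increasing hypothesis is silently upgraded to the strict inequality $-\frac{\partial f}{\partial u}(u^*)>0$, which is exactly the positivity of $p$ (in particular $p(a)\neq 0$) that Lemma \ref{comp1} requires; the stated hypothesis only gives $p\ge 0$. Moreover the gap cannot be closed without strengthening the hypothesis, because the lemma as literally stated is false: take $f(t,u)\equiv 0$, which is (weakly) non-increasing in $u$. Since the ABC--Caputo derivative is built on $u'$ and therefore annihilates constants, every constant function in $H^1(a,b)$ solves $({}^{ABC}{}_{a}D^\alpha u)(t)=0$, so uniqueness fails. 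Your proposed remedy --- strict decrease of $f$ in $u$, at least enough to guarantee $p(a)>0$ --- is the correct fix and makes your proof complete. The fallback you sketch (rerunning the maximum-point argument directly) does not avoid the strengthened hypothesis: if $p(a)=0$ the positive maximum of $z$ may sit at $t_0=a$, where Lemma \ref{th1} only gives the trivial bound $({}^{ABC}{}_{a}D^\alpha z)(a)\ge 0$ (indeed $=0$ by Lemma \ref{zero}) and no contradiction arises, as the constant counterexample confirms.
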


\begin{proof}

Assume that $u_1,u_2 \in H^1(a,b)$ be two solutions of the above equation and let $z=u_1-u_2.$ Then it holds that
$$({}^{ABC} {}_{a}D^\alpha z)(t)=f(t,u_1)-f(t,u_2).$$ Applying the mean value theorem we have
$$f(t,u_1)-f(t,u_2)=\frac{\partial f}{\partial u}(u^*) (u_1-u_2),$$ for some $u^*$ between $u_1$ and $u_2.$ Thus,
\begin{equation}\label{ll}
({}^{ABC} {}_{a}D^\alpha z)(t)-\frac{\partial f}{\partial u}(u^*) z=0.
\end{equation}
Since $-\frac{\partial f}{\partial u}(u^*)>0,$ then $z(t)\le 0$, by virtue of Lemma \ref{comp1}. Also, Eq. (\ref{ll}) holds true for $-z$ and thus
$-z \le 0, $  by virtue of Lemma \ref{comp1}. Thus, $z=0$ which proves that $u_1=u_2.$
\end{proof}

\begin{lemma}\label{nonlinear2}
Consider the nonlinear fractional differential equation
\begin{equation}\label{frac11}
({}^{ABC} {}_{a}D^\alpha u)(t)=f(t,u), \ \ t>a, \ 0<\alpha<1,
\end{equation}
 where $f(t,u)$ is a smooth function. Assume that
 $$\lambda_2 u+h_2(t) \le f(t,u)\le \lambda_1 u+h_1(t), \ \mbox{for all} \ t\in (a,b), u\in H^1(a,b),$$
 where $\lambda_1, \lambda_2 < 0.$ Let $v_1$ and $v_2$ be the solutions of
 \begin{equation}\label{frac22}
({}^{ABC} {}_{a}D^\alpha v_1)(t)=\lambda_1 v_1+h_1(t), \ \ t>0, \ 0<\alpha<1,
\end{equation}
and
\begin{equation}\label{frac33}
({}^{ABC} {}_{a}D^\alpha v_2)(t)=\lambda_2 v_2+h_2(t), \ \ t>0, \ 0<\alpha<1,
\end{equation}
  then it holds that $$v_2(t)\le u(t) \le v_1(t), \ t\ge a.$$
\end{lemma}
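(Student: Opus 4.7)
The plan is to reduce the two-sided bound to two applications of Comparison Principle 1 (Lemma \ref{comp1}), exactly as in the proof of Lemma \ref{big}. Introduce the two auxiliary functions
\[
w_1(t) = u(t) - v_1(t), \qquad w_2(t) = v_2(t) - u(t),
\]
both of which lie in $H^1(a,b)\cap C[a,b]$ by hypothesis, and aim to show $w_1\le 0$ and $w_2 \le 0$ separately.

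For $w_1$, I would subtract the equation for $v_1$ from (\ref{frac11}) to obtain
\[
({}^{ABC}{}_{a}D^\alpha w_1)(t) = f(t,u) - \lambda_1 v_1 - h_1(t).
\]
The key algebraic step is to add and subtract $\lambda_1 u$ on the right-hand side so as to isolate a term proportional to $w_1$:
\[
f(t,u) - \lambda_1 v_1 - h_1(t) = \bigl(f(t,u) - \lambda_1 u - h_1(t)\bigr) + \lambda_1 w_1.
\]
The hypothesis $f(t,u)\le \lambda_1 u + h_1(t)$ forces the first parenthesis to be nonpositive, so
\[
({}^{ABC}{}_{a}D^\alpha w_1)(t) + (-\lambda_1)\, w_1(t) \le 0.
\]
Since $-\lambda_1 > 0$ is a positive constant (hence continuous on $[a,b]$ with $p(a)\neq 0$), Lemma \ref{comp1} applies and yields $w_1(t)\le 0$, i.e.\ $u(t)\le v_1(t)$.

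The argument for $w_2$ is symmetric: subtracting (\ref{frac11}) from the equation for $v_2$ and adding/subtracting $\lambda_2 u$ gives
\[
({}^{ABC}{}_{a}D^\alpha w_2)(t) + (-\lambda_2)\, w_2(t) = \lambda_2 u + h_2(t) - f(t,u) \le 0,
\]
where the inequality uses the lower bound on $f$. Another invocation of Lemma \ref{comp1} (with $p(t)=-\lambda_2>0$) gives $v_2(t)\le u(t)$, and combining the two one-sided estimates finishes the proof.

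I do not expect a serious obstacle here: the whole argument is a template instance of the proof of Lemma \ref{big}, the only new ingredient being the ``add and subtract'' trick that converts the one-sided bound on $f$ into a linear fractional inequality with a positive zero-order coefficient. The one thing to be careful about is that $-\lambda_1,-\lambda_2>0$ really are admissible coefficients $p(t)$ for Lemma \ref{comp1} (constants meet the continuity and $p(a)\neq 0$ requirements), and that the differences $w_1,w_2$ inherit the regularity $H^1(a,b)\cap C[a,b]$ needed to invoke that comparison principle, both of which follow from the hypotheses.
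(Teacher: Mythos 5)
Your proposal is correct and follows essentially the same route as the paper: subtract the comparison equation, use the one-sided bound on $f$ to produce the linear fractional inequality $({}^{ABC}{}_{a}D^\alpha z)(t)-\lambda_1 z(t)\le 0$ for $z=u-v_1$, and invoke Lemma \ref{comp1} with the constant coefficient $-\lambda_1>0$ (the paper in fact misstates this sign as ``$\lambda_1>0$''; your version is the correct reading). The only cosmetic difference is that you write out the symmetric argument for $v_2-u$ explicitly, whereas the paper dismisses it as analogous.
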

\begin{proof}

We shall prove that $u(t)\le v_1(t)$ and by applying analogous steps one can show that $v_2(t) \le u(t).$ By subtracting Eq. (\ref{frac22}) from Eq. (\ref{frac11}) we have
\begin{eqnarray}
\big({}^{ABC} {}_{a}D^\alpha (u-v_1)\big)(t)&=& f(t,u)-\lambda_1 v_1-h_1(t)\nonumber\\
&&\le \lambda_1 u+h_1(t)-\lambda_1 v_1-h_1(t)=\lambda_1(u-v_1).\nonumber
 \end{eqnarray}
 Let $z=u-v_1$, then it holds that
 $$({}^{ABC} {}_{a}D^\alpha z)(t)-\lambda_1 z(t) \le 0.$$
 Since $\lambda_1 > 0$, then $z\le 0,$ by virtue of Lemma \ref{comp1}, which proves the result.
\end{proof}

We now present some examples to illustrate  the efficiency of the obtained results.
\begin{example}
Consider the nonlinear fractional initial value problem
\begin{equation}\label{example1}
({}^{ABC} {}_{0}D^\alpha u)(t)=e^{-u}-2, \ t>0, \ 0<\alpha<1.
\end{equation}
$$u(0)=-\ln(2).$$
Since $e^{-u}-2\ge -u-1$, let
 $v$ be the solution of
\begin{equation}\label{mmm1}
({}^{ABC} {}_{0}D^\alpha v)(t)=-v-1, \ t>0, \ 0<\alpha<1,
\end{equation}
then $v(t)\le u(t)$ by virtue of Lemma \ref{nonlinear2}.  The solution of Eq. (\ref{mmm1}) is given by Eq. (\ref{sol}) with  $\lambda=-1,$ and $f(t)=-1.$ Thus,
$$u(t)\ge v(t)=-
\frac{1}{B(\alpha)+1-\alpha}\big(B(\alpha)  E_\alpha[wt^\alpha]+(1-\alpha)(E_\alpha[wt^\alpha]+\frac{\alpha}{1-\alpha} \frac{t^{\alpha-1}}{\Gamma(\alpha)}*E_\alpha[wt^\alpha]\big),$$
where $\omega=-\frac{\alpha}{B(\alpha)+1-\alpha}.$ We recall that Eq. (\ref{mmm1}) has a solution only if $v(0)=-
1.$
\end{example}

\begin{example}
Consider the nonlinear fractional initial value problem
\begin{equation}\label{example2}
({}^{ABC} {}_{0}D^\alpha u)(t)=e^{-u}-\frac{1}{2} u^2, \ t>0, \ 0<\alpha<1.
\end{equation}
$$u(0)=u_0,$$ where $u_0$ is the unique solution of $e^{-u_0}=\frac{1}{2} u_0^2.$ By the Taylor series expansion of $f(u)=e^{-u},$ one can easily show that
 $e^{-u}-\frac{1}{2} u^2 \le  1-u$.  Let
 $v$ be the solution of
\begin{equation}\label{mmm2}
({}^{ABC} {}_{0}D^\alpha v)(t)=-v+1, \ t>0, \ 0<\alpha<1,
\end{equation}
then $v(t)\ge u(t)$ by virtue of Lemma \ref{nonlinear2}.  The solution of Eq. (\ref{mmm2}) is given by Eq. (\ref{sol}) with $\lambda=-1,$ and $f(t)=1.$ Thus,
$$u(t)\le v(t)=
\frac{1}{B(\alpha)+1-\alpha}\big(B(\alpha)  E_\alpha[wt^\alpha]+(1-\alpha)(E_\alpha[wt^\alpha]+\frac{\alpha}{1-\alpha} \frac{t^{\alpha-1}}{\Gamma(\alpha)}*E_\alpha[wt^\alpha]\big),$$
where $\omega=-\frac{\alpha}{B(\alpha)+1-\alpha}.$ We recall that Eq. (\ref{mmm2}) has a solution only if $v(0)=1.$ Moreover, applying the result of Lemma \ref{big} we have
$||v|| \le 1$, and hence $||u|| \le 1.$
\end{example}

\begin{example}
Consider the nonlinear fractional initial value problem
\begin{equation}\label{example3}
({}^{ABC} {}_{0}D^\alpha u)(t)= -e^{u}(3+\cos(u))+ 4 e^{-t}, \ t>0, \ 0<\alpha<1.
\end{equation}
$$u(0)=0.$$ Let $h(u)=-e^{u}(3+\cos(u))$, since $h''(u)=e^u(-3+2\sin(u)) \le 0,$ by the Taylor series expansion method  one can easily show that
 $h(u) \le h(0)+h'(0) u= -4-4 u.$   Let
 $v$ be the solution of
\begin{eqnarray}\label{mmm3}
({}^{ABC} {}_{0}D^\alpha v)(t)&=&-4v-4+4 e^{-t}, \ t>0, \ 0<\alpha<1,\\
v(0)&=&0,\nonumber
\end{eqnarray}
then $v(t)\ge u(t)$ by virtue of Lemma \ref{nonlinear2}.  The solution of Eq. (\ref{mmm3}) is given by Eq. (\ref{sol}) where $\lambda=-4,$ and $f(t)=-4+4e^{-t}.$  Applying the result of Lemma \ref{big} we have
$$||u|| \le ||v|| \le |\frac{-4+4e^{-t}}{4}|=1-e^{-t}, \ t>0.$$
\end{example}


{\bf Acknowledgment}: The author acknowledges the support of the United Arab emirates University under the Fund No. 31S239-UPAR(1) 2016.


\begin{thebibliography}{1000}

\bibitem{abdon} A. Atangana, D. Baleanu, New fractional derivatives with non-local and non-singular kernel: theory and applications to heat transfer model, Therm. Sci., 20 (2016), 763-769.
\bibitem{abdon2} A. Atangana, I. Koca, Chaos in a simple nonlinear system with Atangana-Baleanu derivatives with fractional order, Chaos, Solitons and Fractals, 89(2016), 447-454.
\bibitem{thabet1} T. Abdeljawad, D. Baleanu, Integration by parts and its applications of a new nonlocal fractional
derivative with Mittag-Leffler nonsingular kernel, J. Nonlinear Sci. Appl., 10 (2017), 1098-1107.




 \bibitem{TD ROMP}T. Abdeljawad, D. Baleanu,  On fractional derivatives with exponential kernel and their discrete versions, Journal of Reports in Mathematical Physics, (2017).

\bibitem{TD ADE Mont} T. Abdeljawad, D. Baleanu,   Monotonicity results for fractional difference operators with discrete exponential kernels, Advances in Difference Equations (2017) 2017:78 DOI 10.1186/s13662-017-1126-1.

    \bibitem{TD ADE 2016}T. Abdeljawad, D. Baleanu, Discrete fractional differences with nonsigular discrete Mittag-Leffler kernels, Advances in Difference Equations  (2016) 2016:232, DOI 10.1186/s13662-016-0949-5.


\bibitem{badr} B. Saad, T. Alkahtani, Chua's circuit model with Atangana-Baleanu derivative with fractional order, Chaos, Solitons and Fractals, 89(2016), 547-551.
\bibitem{badr2} B. Saad, T. Alkahtani, A note on Cattaneo-Hristov model with non-singular fading memory, Thermal Science, 21 (2017), 1-7.


    \bibitem{ref10}
M. Al-Refai, On the fractional derivative at extreme points, Elect. J. of Qualitative Theory of Diff. Eqn.,  55(2012),  1-5.


\bibitem{refai2} M. Al-Refai, Basic results on nonlinear eigenvalue problems of fractional order,
Electronic Journal of Differential Equations, 2012(2012),  1-12.



\bibitem{ref-luch} M. Al-Refai, Yu. Luchko, Maximum principles for the fractional diffusion equations with the Riemann-Liouville fractional derivative and their applications.  Fract. Calc. Appl. Anal., 17(2014), 483-498.

\bibitem{refai-multi}    M. Al-Refai, Yu. Luchko, Maximum principle for the multi-term time-fractional diffusion equations with the Riemann-Liouville fractional derivatives, Journal of Applied Mathematics and Computation,  257 (2015), 40-51.








\bibitem{refai-distributed} M. Al-Refai, Yu. Luchko, Analysis of fractional diffusion equations of  distributed order: Maximum principles and its applications, Analysis 2015, DOI: 10.1515/anly-2015-5011.




\bibitem{caputo} M. Caputo, M. Fabrizio, A new definistion of fractional derivative without singular kernel, Prog. frac. Differ. appl., {\bf 1} (2) (2015), 73-85.


\bibitem{dj} J.D. Djida, A. Atangana, I. Area, Numerical computation of a fractional derivative with non-local and non-singular kernel, Math. Model. Nat. Phenom., 12(3) (2017), 4-13.


\bibitem{FDL}
A. Freed, K. Diethelm,   Yu. Luchko, {\it
Frac\-tio\-nal-order viscoelasticity (FOV): Constitutive development
using the fractional calculus}. NASA's
Glenn Research Center, Ohio (2002).



\bibitem{gau} J.F. Gómez-Aguila, Space-time fractional diffusion equation using a derivative with nonsingular and regular kernel, Physica A: Statistical Mechanics and its Applications,
 465 (2017), 562-572.

\bibitem{gorenflo} R. Gorenflo, F. Mainardi, Fractional calculus, integral and differential equations of fractional order, in A. Carpinteri and F. Mainardi (Editors), "Fractals and Fractional Calculus in Continuum Mechanics", Springer Verlag, Wien (1997), 223-276.

  \bibitem{hannaken} J. W. Hanneken. D. M. Vaught, B. N. Narahari, Enumeration of the real zeros of the Mittag-Leffler function $E_\alpha(z), 1<\alpha<2$,  Advances in Fractional
 Calculus: Theoretical Developments and Applications in Physics and Engineering, Springer, (2007), 15-26.  DOI	10.1007/978-1-4020-6042-7-2.



\bibitem{Hil00}
R. Hilfer (Ed.), {\it Applications of Fractional Calculus in
Physics}. World Scientific, Singapore (2000).





\bibitem{Kla08}
R. Klages, G. Radons,   I.M. Sokolov  (Eds.),  {\it Anomalous
Transport: Foundations and Applications}. Wiley-VCH, Weinheim (2008).




\bibitem{Mai_book}
F. Mainardi,
{\it Fractional Calculus and Waves in Linear Viscoelasticity}.
Imperial College Press, London (2010).




\end{thebibliography}
\end{document}